\newtheorem{theorem}{Theorem}
\newtheorem{lemma}{Lemma}
\newtheorem{proposition}{Proposition}
\DeclarePairedDelimiter{\ceil}{\lceil}{\rceil}
\DeclareMathOperator{\poly}{poly}
\newcommand{\Nbh}{\mathcal{N}}
\newcommand{\NbhP}{\Nbh^+}
\newcommand{\NbhM}{\Nbh^-}
\title{The Burning Number of Directed Graphs: Bounds and Computational Complexity}
\author{Remie Janssen\footnote{Delft Institute of Applied Mathematics, Delft University of Technology, Van Mourik Broekmanweg 6,
2628 XE Delft,
The Netherlands. R.Janssen-2@tudelft.nl. Research funded by the Netherlands Organization for Scientific Research (NWO), Vidi grant 639.072.602 of dr. Leo van Iersel.}}
\date{\today}
\begin{document}

\maketitle
\begin{abstract}
The burning number of a graph was recently introduced by Bonato et al. 
Although they mention that the burning number generalises naturally to directed graphs, no further research on this has been done. 
Here, we introduce graph burning for directed graphs, and we study bounds for the corresponding burning number and the hardness of finding this number. 
We derive sharp bounds from simple algorithms and examples. 
The hardness question yields more surprising results: finding the burning number of a directed tree is NP-hard, but FPT; however, it is W[2]-complete for DAGs. 
Finally, we give a fixed-parameter algorithm to find the burning number of a digraph, with a parameter inspired by research in phylogenetic networks.  
\end{abstract}

\section{Introduction}
The burning number of a graph was recently introduced by Bonato et al. \cite{bonato2014burning} as a model of social contagion. 
Social contagion is the spread of rumours, behaviour, emotions, or other social information through social networks (e.g., \cite{bond201261,kramer2014experimental}). 
This information transfer can be active, but may also happen passively, for example by reading posts on social media.

In the discussion of \cite{bonato2016burn}, Bonato et al. remark that the burning number generalises naturally to directed graphs. 
However, no further research on burning directed graphs has been done. 
Nevertheless, the topic seems relevant, for example in the same setting as undirected graphs, social contagion: 
some communication occurs in mostly one direction, such as when people follow other people on social media. 
Hence, in this paper, we introduce the directed version of graph burning and we study some basic problems related to graph burning.

The central concept in graph burning is the burning number. 
The burning number of a graph is the number of steps it takes to ``burn'' this graph. 
Here, burning is a step-wise process roughly defined as follows. 
In each step, all neighbours of all burned nodes are burned, and one extra node is chosen that is burned as well. 
This models social contagion in the sense that neighbours start to burn correspond to information being spread through the network. 
The basic questions we study relate to bounds on the burning number for different classes of graphs, and to the computational complexity of determining the burning number of a graph. 
Both of these problems have been studied for undirected graphs \cite{land2016upper, bessy2018bounds, bonato2019bounds, bessy2017burning}.

\paragraph{Structure of the paper.} 
We start by defining our notation, and basic concepts in graph burning in \cref{sec:Preliminaries}. 
Then, in \cref{sec:Bounds}, we give bounds on the burning number for weakly connected digraphs, single sourced DAGs, and strongly connected graphs. 
Following this, in \cref{sec:Complexity}, we turn to the complexity of the graph burning problem in directed graphs. 
We show that the problem is NP-hard for trees, but solvable in FPT time; and for DAGs, we show that the problem is W[2]-hard. 
We finish this section with an algorithm for digraph burning. 
Finally, in \cref{sec:Discussion}, we discuss the relation between directed graph burning, undirected graph burning and some other problems.

\section{Preliminaries}\label{sec:Preliminaries}
The burning number of directed graphs is defined analogously to that of undirected graphs, with one adaptation: burning only occurs in the direction of arcs. 
This leads to the following definition of digraph burning. 

A sequence $(x_1,\ldots,x_b)$ of nodes is a \emph{burning sequence} for a digraph $D$ if after $b$ steps of the following process, every node of $D$ is \emph{burned}. 
The $i$-th step consists of burning all out-neighbours of all currently burning nodes, and then burning the node $x_i$. 
Note that burning of neighbours occurs before burning the chosen node, so after the first step, only the chosen node is burned. 
The \emph{burning number} of a digraph is the length of the shortest burning sequence for that graph. This leads to the following computational problem.

\medskip
\fbox{
\parbox{0.8\linewidth}{
{\sc Digraph Burning}\\
{\bf Input:} A digraph $D$.\\
{\bf Parameter:} A natural number $b$.\\
{\bf Output:} A burning sequence of $D$ of length at most $b$ if it exists; NO otherwise.}
}
\medskip

When we restrict the input to smaller classes of graphs, like trees or DAGs, we change the name of the problem accordingly, so it becomes {\sc Tree Burning} resp. {\sc DAG Burning}.

Like for undirected graphs, the directed burning number can alternatively be defined using covers \cite{bonato2016burn} of neighbourhoods.
The \emph{$k$-out-neighbourhood} $\NbhP_k(v)$ of a node $v$ consists of all nodes that can be reached from $v$ using at most $k$ arcs. 
Similarly, the \emph{$k$-in-neighbourhood} $\NbhM_k(v)$ of $v$ consists of all nodes from which $v$ can be reached using at most $k$ arcs. Note that $\NbhM_0(v)=\NbhP_0(v)=\{v\}$.
A digraph has burning number $b$ if and only if there is a sequence of $b$ nodes $(v_1,\ldots,v_b)$ such that 
\[D=\bigcup_{i=1}^b\NbhP_{i-1}(v_i).\]
The corresponding burning sequence is $(v_b,\ldots,v_1)$, with the minor caveat that a burning node cannot be chosen in any step, but if the cover does this, an arbitrary non-burning node can be chosen instead.\\
\\
For algorithmic purposes, we also consider variants of the directed burning problem. 
For recursion, we need to consider instances where parts of the graph are already burned, and, hence, do not need to be burned anymore; 
and we need to consider instances where we are no longer allowed to choose a node to burn in each step.

These variations naturally lead to the concepts of partial burning---first defined for undirected graphs in \cite{bonato2019improved}---and burning ranges. 
For the first, we no longer ask for the whole graph to be burned at the end of the process, but only a selected subset of nodes. 
In other words, the graph only needs to be partially burned. 
For the second, it is convenient to consider the covering definition of the burning number where the sequence is replaced by a map. That is, the burning number is at most $b$ if there is a \emph{burning assignment} $\phi:[b]\to V(D)$ such that
\[D=\bigcup_{i=1}^b\NbhP_{i-1}(\phi(i)).\]
In this formulation, we call the set $[b]=\{1,\ldots,b\}$ the set of \emph{burning ranges}. 
Of course, the set of burning ranges can be any set of natural numbers. 
This leads to the following generalization of the burning problem.

\medskip
\fbox{
\parbox{0.8\linewidth}{
{\sc Partial Digraph Burning With Ranges}\\
{\bf Input:} A Digraph $D=(V,A)$, a set $X\subseteq V$, and a set of burning ranges $R$.\\
{\bf Output:} A burning assignment $\phi:R\to V$ such that $X\subseteq\bigcup_{r\in R}\NbhP_{r-1}(\phi(r))$ if it exists; NO otherwise.}
}
\medskip


\section{Bounds for the directed burning number}\label{sec:Bounds}
For undirected graphs, people have attempted to prove the conjecture that any connected graph with $n$ nodes has burning number at most $\ceil{\sqrt n}$. Therefore, the worst case for undirected graphs seems to be the path, on which this bound is attained. Here, we investigate similar bounds for directed graphs with several connectivity assumptions. We start by showing that not assuming any connectivity makes the problem quite uninteresting. Later, we show that for directed graphs, the directed path is the worst case among al sufficiently connected graphs.

\begin{lemma}\label{clm:StupidBoundDisconnected}
Let $D$ be a digraph with $n$ nodes. If $D$ contains no arcs, or $n=1,2$, the burning number is equal to $n$.
If $n>2$ nodes and $D$ has at least one arc, the burning number is at most $n-1$, and this bound is sharp. 
\end{lemma}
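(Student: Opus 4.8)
The plan is to handle the two regimes of the statement separately, using the covering reformulation of the burning number (the characterization $D=\bigcup_{i=1}^b\NbhP_{i-1}(v_i)$ from \cref{sec:Preliminaries}) for the second one.

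For the first sentence I would argue directly from the burning process. If $D$ has no arcs, then no step ever burns an out-neighbour, so the $i$-th step burns exactly the chosen node $x_i$ and nothing else new; hence $n$ steps are necessary, and choosing $n$ distinct nodes shows they suffice. For $n=1$ the claim is immediate. For $n=2$ the key observation is that neighbours are burned \emph{before} the chosen node, so after the first step only $x_1$ is on fire regardless of the arcs present; thus one step never suffices and two always do, giving burning number $2$.

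For the bound $n-1$ when $n>2$ and $D$ has an arc $u\to v$, I would invoke the characterization above (the caveat about choosing an already-burned node being harmless, as remarked in the preliminaries). Taking $b=n-1$, the radii $0,1,\ldots,n-2$ are available: I assign radius $1$ to the centre $u$, so that $\NbhP_1(u)\supseteq\{u,v\}$, and assign the remaining $n-2$ radii $0,2,3,\ldots,n-2$ bijectively to the $n-2$ vertices of $V(D)\setminus\{u,v\}$, each taken as its own singleton centre (valid since $\NbhP_r(w)\ni w$ for all $r\ge0$). The union of these out-neighbourhoods is all of $V(D)$, so the burning number is at most $n-1$.

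For sharpness I would take $D$ to consist of a single arc $u\to v$ together with $n-2$ isolated vertices (so at least one isolated vertex, as $n>2$). The point is that an isolated vertex $w$ lies in $\NbhP_r(c)$ only when $c=w$, and then $\NbhP_r(c)=\{w\}$; hence every isolated vertex must occur as a centre in any covering sequence and contributes nothing else. A covering sequence of length $n-2$ is therefore entirely consumed by the $n-2$ isolated vertices and cannot cover $u$ or $v$, so the burning number is at least $n-1$, hence exactly $n-1$ by the upper bound. I expect the only care needed is in this last step: checking that radius $0$ need not be wasted on $u$ (it can go to an isolated vertex, since there is at least one) and that a single centre covers both ends of an arc precisely when given radius at least $1$. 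Beyond that bookkeeping the argument is routine, so I do not anticipate a genuine obstacle.
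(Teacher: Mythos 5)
Your proof is correct and follows essentially the same route as the paper: burn the tail of an arc so its head comes for free, spend the remaining steps on the other $n-2$ vertices, and use an arc plus $n-2$ isolated vertices for sharpness. You merely phrase the upper bound in the covering formulation rather than the step-by-step process, and you spell out the details (the $n\le 2$ cases and the lower-bound count for sharpness) that the paper leaves implicit.
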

\begin{proof}
The first part of the lemma is obvious, so we assume $D$ has at least one arc, and more than $2$ nodes. We first burn the tail end of an arc. The head of this arc will burn in the next time step. In the subsequent time steps, we choose to burn all other nodes. This takes $n-1$ steps in total. The bound is sharp, as we can take the graph with $n-2$ isolated vertices, and the remaining $2$ nodes connected by an arc.
\end{proof}

This proof relies on graphs being poorly connected. As this makes no sense in the setting of social contagion, we now restrict to weakly connected graphs.

\begin{lemma}
For weakly connected digraphs with $n>2$ nodes, the burning number is at most $n-1$, and this bound is sharp.
\end{lemma}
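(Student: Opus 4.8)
The plan is to prove the upper bound by exhibiting an explicit burning sequence of length $n-1$, and then to prove sharpness by constructing a weakly connected digraph on $n$ nodes whose burning number is exactly $n-1$.

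For the upper bound, I would argue as follows. Since $D$ is weakly connected and has $n > 2 \geq 2$ nodes, its underlying undirected graph is connected and therefore has at least one edge, which means $D$ has at least one arc $(u,w)$. Burn the tail $u$ in the first step; then $w$ burns in the second step (before we choose the second node to burn), so after two steps at least two distinct nodes, $u$ and $w$, are burned. In each of the remaining $n-3$ steps we simply pick an as-yet-unburned node and burn it. After $2 + (n-3) = n-1$ steps all $n$ nodes are burned, so the burning number is at most $n-1$. (This is essentially the argument of \cref{clm:StupidBoundDisconnected}, which only used the existence of a single arc, and that hypothesis is automatically satisfied under weak connectivity.)

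For sharpness, I would take $D$ to be the \emph{directed star with all arcs pointing inward}: a center $c$ together with $n-1$ leaves $\ell_1,\dots,\ell_{n-1}$, and an arc $(\ell_j, c)$ for each $j$. This digraph is weakly connected. Now observe that the only out-neighbour relations are $\ell_j \to c$; in particular no leaf can ever be burned except by being chosen, since no node has a leaf as an out-neighbour. Using the covering formulation, a burning sequence of length $b$ corresponds to nodes $v_1,\dots,v_b$ with $D = \bigcup_{i=1}^b \NbhP_{i-1}(v_i)$. Each $\NbhP_{i-1}(v_i)$ contains at most one leaf (namely $v_i$ itself, if $v_i$ is a leaf — otherwise $v_i = c$ and $\NbhP_{i-1}(c) = \{c\}$). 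Hence the union of $b$ such out-neighbourhoods contains at most $b$ leaves, so to cover all $n-1$ leaves we need $b \geq n-1$; combined with the upper bound this gives burning number exactly $n-1$.

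The main obstacle, such as it is, is choosing the right extremal example and confirming it is weakly connected while still forcing the burning process to pick almost every node individually; the in-star does this cleanly because its arc directions make neighbour-burning essentially useless. One should also double-check the edge cases $n=3,4$ against the example, and note the minor caveat from the preliminaries that a chosen node must be unburned — here that is never an issue since each chosen $v_i$ is a distinct leaf.
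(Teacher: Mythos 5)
Your proof is correct and follows essentially the same approach as the paper: the upper bound is obtained by invoking the single-arc argument of \cref{clm:StupidBoundDisconnected}, and your extremal ``in-star'' is exactly the paper's example of $n-1$ sources each with an arc to one common node, with the same counting argument that every source must itself appear in the burning sequence.
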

\begin{proof}
As weakly connected graphs with $n>2$ nodes have at least one arc, \cref{clm:StupidBoundDisconnected} applies and the bound of $n-1$ holds. The bound is still sharp: take the digraph consisting of $n-1$ sources, all with an arc pointing to the remaining node. As each source has to be part of the burning sequence, this graph has burning number at least $n-1$.
\end{proof}

In this case, the number of sources makes it easy to force a high burning number. Therefore, we now limit the number of sources as well, before turning to strongly connected digraphs.

\subsection{Single source DAGs}
We start by computing the burning number of the simplest single source DAG, the directed path. Then, we show that paths have the largest burning number, in the sense that among all single source DAGs with $n$ nodes, the burning number of the path is the largest. 

\begin{lemma}
The burning number of the directed path on $n$ nodes, $P_n$, is \[\ceil[\Big]{\sqrt{2n+\frac{1}{4}}-\frac{1}{2}}.\]
\end{lemma}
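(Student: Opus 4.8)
The plan is to work with the covering formulation. A directed path $P_n$ with vertices $u_1 \to u_2 \to \cdots \to u_n$ has the property that the $k$-out-neighbourhood of $u_j$ is exactly $\{u_j, u_{j+1}, \ldots, u_{\min(j+k, n)}\}$ — an interval of at most $k+1$ consecutive vertices, anchored on the left at $u_j$. So a burning assignment $\phi:[b]\to V(P_n)$ with $P_n = \bigcup_{i=1}^b \NbhP_{i-1}(\phi(i))$ amounts to covering the $n$ vertices by $b$ intervals, where the interval used with range $i$ has length at most $i$ (i.e. covers at most $i$ vertices). The maximum total number of vertices that $b$ such intervals can cover is $1 + 2 + \cdots + b = \binom{b+1}{2} = b(b+1)/2$, achieved by placing disjoint consecutive intervals of sizes $1, 2, \ldots, b$. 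So the burning number $b$ is the least integer with $b(b+1)/2 \ge n$.

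I'd want an upper and a lower bound. For the upper bound: if $b(b+1)/2 \ge n$, partition $u_1, \ldots, u_n$ greedily into blocks — the first block a single vertex, the next a pair, then a triple, and so on — and whenever a block of intended size $i$ would run past $u_n$, just truncate it; assign $\phi(i)$ to be the first vertex of the $i$-th block (the leftmost vertex of that interval), so $\NbhP_{i-1}(\phi(i))$ covers the whole block. Since the blocks partition $V(P_n)$, we get a valid covering with $b$ ranges. For the lower bound: with $b-1$ ranges one can cover at most $(b-1)b/2$ vertices, so if $(b-1)b/2 < n$ no burning sequence of length $b-1$ exists; this is exactly the statement that the burning number is at least $b$.

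**Extracting the closed form.**

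The remaining task is to show that the least integer $b$ with $b(b+1)/2 \ge n$ equals $\lceil \sqrt{2n + 1/4} - 1/2 \rceil$. This is pure algebra: $b(b+1)/2 \ge n \iff b^2 + b - 2n \ge 0 \iff b \ge \frac{-1 + \sqrt{1 + 8n}}{2} = \sqrt{2n + \tfrac14} - \tfrac12$ (taking the positive root), so the least such integer is the ceiling of the right-hand side. One should double-check the edge cases $n = 1, 2$ and confirm the ceiling behaves correctly when $\sqrt{2n+1/4} - 1/2$ happens to be an integer (it is an integer precisely when $n$ is a triangular number, and then equality $b(b+1)/2 = n$ holds, so the ceiling gives the right answer).

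**Main obstacle.**

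There is no deep obstacle here; the only thing to be careful about is the direction of the intervals. Because burning spreads only along arcs, $\NbhP_{k-1}(v)$ on a directed path is a one-sided interval (it extends forward from $v$, never backward), which is exactly why disjoint consecutive blocks tile perfectly with no overlap wasted — whereas on an undirected path the neighbourhoods are two-sided and the optimum is governed by $\lceil\sqrt n\rceil$ instead. So the crux of the writeup is just making the interval-covering correspondence precise and then doing the quadratic-formula manipulation cleanly; I expect the "hard" part to be merely bookkeeping the truncated last block and the ceiling's tie case.
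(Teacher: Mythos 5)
Your proposal is correct and follows essentially the same route as the paper: the paper's (much terser) proof also rests on the observation that $t$ time steps burn at most $t(t+1)/2$ nodes of the directed path, with the tiling by disjoint one-sided intervals giving the matching upper bound, followed by the same quadratic-formula manipulation. Your version just makes the upper/lower bound split and the interval-covering correspondence explicit.
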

\begin{proof}
In $t$ time steps, we can burn $t(t+1)/2$ nodes of the path. Hence, to burn a path of length $n$, we need $t_n$ steps such that $n\leq t_n(t_n+1)/2$, or $t_n\geq \sqrt{2n+\frac{1}{4}}-\frac{1}{2}$. As the burning number is integral, we take the ceiling to get the burning number.
\end{proof}

To prove that all single source DAGs with $n$ nodes have burning number at most $\ceil[\Big]{\sqrt{2n+1/4}-1/2}$, we use the concept of eccentricity.
The \emph{eccentricity} of a node $v$ is the maximal distance to any node that can be reached from $v$. Note that we only look at the distances to nodes that can be reached. Hence, the eccentricity of a node is always a finite number.

\begin{lemma}\label{clm:eccentricity}
Let $D$ be a single source DAG, and let $\rho$ be its source with eccentricity $e$. For all $i\in[e]$, there is a node in $D$ with eccentricity $i$.
\end{lemma}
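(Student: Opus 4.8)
The plan is to argue that eccentricity, viewed along a longest directed path from the source, decreases by exactly one at each step, so every value from $e$ down to $0$ is realized. Concretely, let $\rho = v_0, v_1, \ldots, v_e$ be a directed path in $D$ witnessing the eccentricity $e$ of the source $\rho$ (such a path exists by definition of eccentricity, and it has length exactly $e$). I claim that for each $i \in \{0, 1, \ldots, e\}$, the node $v_{e-i}$ has eccentricity at least... wait, the cleaner claim is: the node $v_i$ on this path has eccentricity exactly $e - i$. Then letting $i$ range over $\{0,\ldots,e\}$ produces a node of every eccentricity in $\{0,1,\ldots,e\}$, which in particular covers all $i \in [e]$.

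First I would establish the lower bound $\operatorname{ecc}(v_i) \geq e - i$: the sub-path $v_i, v_{i+1}, \ldots, v_e$ is a directed path of length $e-i$, so some node (namely $v_e$) is reachable from $v_i$ at distance at least $e-i$; here I should double-check that the sub-path is actually a shortest path from $v_i$ to $v_e$, which follows because if there were a shorter route from $v_i$ to $v_e$ we could splice it with $v_0,\ldots,v_i$ to get a walk from $\rho$ to $v_e$ of length less than $e$ — but that only bounds distance, not the existence of a longer target, so for the lower bound I actually just need that $v_e$ is reachable from $v_i$ and sits at distance $\geq e-i$, which again uses that $\rho$ has eccentricity $e$. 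Second, for the upper bound $\operatorname{ecc}(v_i) \leq e - i$: suppose some node $w$ is reachable from $v_i$ at distance $d > e - i$. Then $w$ is reachable from $\rho$ via $v_0, \ldots, v_i$ followed by a shortest $v_i$–$w$ path, giving a walk of length $i + d > e$; since $D$ is a DAG this walk contains a directed path of length $\geq$ its own length is false in general, but in a DAG a walk from $\rho$ to $w$ can be shortcut to a path — and more importantly, $\operatorname{dist}_D(\rho, w) \le i + d$ would already contradict... no: I need $\operatorname{dist}(\rho,w) > e$, so I must ensure the concatenation is itself a shortest path or at least that no shorter route exists. This is exactly where acyclicity and single-sourcedness matter: every node is reachable only from $\rho$-side, and I should argue that the prefix $v_0,\ldots,v_i$ is a shortest $\rho$–$v_i$ path, so $\operatorname{dist}(\rho, v_i) = i$ and hence $\operatorname{dist}(\rho, w) \le i + d$, and then I need the reverse inequality $\operatorname{dist}(\rho,w) \ge$ something forcing a contradiction with $\operatorname{ecc}(\rho) = e$.

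The main obstacle, then, is the upper bound argument: a priori a node $v_i$ on the longest path could have some \emph{other} out-neighbourhood reaching far beyond $v_e$, making $\operatorname{ecc}(v_i)$ larger than $e-i$. The fix is to choose $v_i$ more carefully: rather than fixing one path up front, I would define, for each $i$, the path greedily so that $v_{i+1}$ is an out-neighbour of $v_i$ of \emph{maximum} eccentricity. Then $\operatorname{ecc}(v_{i+1}) = \operatorname{ecc}(v_i) - 1$ whenever $\operatorname{ecc}(v_i) \ge 1$: the inequality $\operatorname{ecc}(v_{i+1}) \le \operatorname{ecc}(v_i) - 1$ holds because any node at distance $d$ from an out-neighbour is at distance $\le d+1$ from $v_i$; and $\operatorname{ecc}(v_{i+1}) \ge \operatorname{ecc}(v_i) - 1$ holds by the maximality in the choice, since a node witnessing $\operatorname{ecc}(v_i)$ lies at distance $\operatorname{ecc}(v_i)-1$ from some out-neighbour of $v_i$, and we picked the out-neighbour of largest eccentricity. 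Starting from $v_0 = \rho$ with $\operatorname{ecc}(\rho) = e$ and iterating until eccentricity drops to $0$, the sequence $\operatorname{ecc}(v_0), \operatorname{ecc}(v_1), \ldots$ runs through $e, e-1, \ldots, 1, 0$, so every value in $[e]$ is attained; acyclicity guarantees the walk we build never repeats a node, so it stays inside $D$ and terminates. I would present the lemma this way, with the "maximum-eccentricity out-neighbour" construction as the heart of the argument and acyclicity invoked only to keep the constructed walk simple.
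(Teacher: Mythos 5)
Your final construction (follow an out-neighbour of maximum eccentricity) is sound, but the key claim you hang it on --- that $\operatorname{ecc}(v_{i+1}) = \operatorname{ecc}(v_i) - 1$ --- is false, and the justification you give for the ``$\leq$'' half is a non sequitur. From ``any node at distance $d$ from an out-neighbour is at distance $\leq d+1$ from $v_i$'' you get $\operatorname{dist}(v_i,w)\leq \operatorname{dist}(v_{i+1},w)+1$, which bounds $\operatorname{ecc}(v_i)$ from above in terms of $\operatorname{ecc}(v_{i+1})$; it does not bound $\operatorname{ecc}(v_{i+1})$ from above. The inequality you would actually need, $\operatorname{dist}(v_{i+1},w)\leq\operatorname{dist}(v_i,w)-1$, fails in DAGs with shortcuts. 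Concretely, take $\rho\to a$, $\rho\to b$, $a\to c_1\to c_2\to c_3$, $b\to c_3$. Then $\operatorname{ecc}(\rho)=3$ (witness $c_2$; note $c_3$ is reachable in two steps via $b$), while the maximum-eccentricity out-neighbour of $\rho$ is $a$ with $\operatorname{ecc}(a)=3$ as well: the eccentricity does not drop. The same phenomenon is what kills your first (fixed longest path) attempt, as you yourself noticed; a node below the source can even have strictly \emph{larger} eccentricity than the source.

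The good news is that only the ``$\geq$'' half of your claim is needed, and that half you prove correctly: if $\operatorname{ecc}(v_i)=k\geq 1$, then some out-neighbour of $v_i$ has eccentricity at least $k-1$, so your walk satisfies $\operatorname{ecc}(v_{i+1})\geq\operatorname{ecc}(v_i)-1$, starts at $e$, and by acyclicity and finiteness terminates at a sink, which has eccentricity $0$. An integer sequence that starts at $e$, ends at $0$, and decreases by at most $1$ per step attains every value of $[e]$: for $j\in[e]$, let $i^*$ be the last index with $\operatorname{ecc}(v_{i^*})\geq j$; then $\operatorname{ecc}(v_{i^*})\leq \operatorname{ecc}(v_{i^*+1})+1\leq (j-1)+1=j$, so $\operatorname{ecc}(v_{i^*})=j$. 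With this discrete intermediate-value step substituted for the false equality, your proof goes through, and the ``maximum eccentricity'' choice becomes unnecessary --- any out-neighbour on a shortest path to a witness of $\operatorname{ecc}(v_i)$ will do. For comparison, the paper runs a downward induction with the strengthened invariant that there is a node of eccentricity $i$ having no node of eccentricity at least $i$ below it, which is its way of sidestepping the same upward-jump problem; your repaired walk argument is a genuinely different, and arguably cleaner, route to the same conclusion.
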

\begin{proof}
We prove the claim using induction on the eccentricity from high to low. For the basis, note that the source has eccentricity $e$, so there exists a lowest node of eccentricity $e$. 

Now assume there is a node $x_i$ with eccentricity $i\in[e]$ with no node of eccentricity at least $i$ below it. We prove there exists a node with eccentricity $i-1$ with no node of eccentricity at least $i-1$ below it. Note that at least one of the children $c$ of $x_i$ has eccentricity at least $i-1$, otherwise the shortest distance of $i$ from $x_i$ to some other node cannot be achieved. As $x_i$ there is no node below $x_i$ with eccentricity $i$ or greater, $c$ has to have eccentricity $i-1$. Now look for the lowest node with eccentricity $i-1$ below $c$ (including $c$). This node has no nodes of eccentricity $i-1$ or greater below it. 

Therefore, for each $i\in[e]$, there is a node with eccentricity $i$ and no node of eccentricity at least $i$ below it. In particular, for each $i\in[e]$, there is a node with eccentricity $i$, which is what we needed to prove.
\end{proof}

\begin{proposition}\label{clm:BurningSingleSourceDAG}
The directed burning number of a single source DAG is at most $\ceil[\Big]{\sqrt{2n+\frac{1}{4}}-\frac{1}{2}}$.
\end{proposition}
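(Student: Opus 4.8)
The plan is to induct on $n$, reducing a single-source DAG to a smaller single-source DAG by deleting the reachability-closure of a well-chosen vertex. Write $b$ for the claimed bound $\ceil{\sqrt{2n+1/4}-1/2}$; by the path formula just proved, $b$ is exactly the least integer with $b(b+1)/2\ge n$. The base case $n=1$ is immediate, and for $n\ge 2$ this least integer satisfies $b\ge 2$. Let $\rho$ be the source of $D$ and $e$ its eccentricity; since every vertex is reachable from $\rho$, we have $\NbhP_e(\rho)=V(D)$.

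I would first dispose of the case $e\le b-1$: here a burning assignment that gives $\rho$ the largest range, $b-1$, already burns everything, because $\NbhP_{b-1}(\rho)\supseteq\NbhP_e(\rho)=V(D)$. So assume $e\ge b$; since $b\ge 2$ this gives $1\le b-1\le e$, so \cref{clm:eccentricity} produces a vertex $x$ of eccentricity exactly $b-1$. This $x$ has the two properties I want to use: the out-ball $\NbhP_{b-1}(x)$ equals the full set $R$ of vertices reachable from $x$ (including $x$ itself), and a shortest path realising the eccentricity shows $|R|\ge b$. Moreover $x\ne\rho$, since their eccentricities differ.

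The key step is to delete $R$ and check that $D':=D-R$ (the subgraph induced on $V(D)\setminus R$) is again a single-source DAG with source $\rho$: because $R$ is closed under reachability, any path in $D$ from $\rho$ to a vertex outside $R$ must stay outside $R$, so $\rho$ still reaches every vertex of $D'$ within $D'$. Since $|R|\ge b$, the graph $D'$ has $n'\le n-b$ vertices, and rearranging $b(b+1)/2\ge n$ gives $(b-1)b/2\ge n-b\ge n'$, so the induction hypothesis bounds the burning number of $D'$ by $b-1$. Take a burning assignment for $D'$ using ranges $\{0,\dots,b-2\}$ (padding with redundant ranges if needed); adding the range-$(b-1)$ burn at $x$ turns it into a burning assignment for $D$ with ranges $\{0,\dots,b-1\}$, i.e.\ a burning sequence of length $b$, as required.

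The genuinely delicate point — and the step most easily botched — is the choice of what to delete. Removing an out-ball around $\rho$ is the naive move, but it can leave several sources and break the recursion; instead, the reachability-closure of a vertex of eccentricity \emph{exactly} $b-1$ is precisely the set that is covered by a single range-$(b-1)$ burn, is big enough ($\ge b$ vertices) to make the counting tight, and leaves the source together with its reach to all surviving vertices intact. The remaining ingredients — \cref{clm:eccentricity}, the single-source-preservation argument, and the inequality $b(b+1)/2\ge n$ — are routine once that choice is made.
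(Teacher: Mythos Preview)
Your argument is correct and follows the same route as the paper: induct on $n$, handle the small-eccentricity case by burning the source, and otherwise use \cref{clm:eccentricity} to find a vertex $x$ of eccentricity $b-1$, delete $\NbhP_{b-1}(x)$, and recurse. You supply the justification the paper omits for why $D'$ stays single-source (reachability-closure of $R$), and your range indexing is shifted by one relative to the paper's convention (the paper calls the set $[b]$ the ranges, with range $r$ covering $\NbhP_{r-1}$), but the mathematics is identical.
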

\begin{proof}
For convenience in notation, we define $b_n$ as the smallest solution for $n\leq \sum_{i=1}^b i$, i.e., $b_n:=\ceil[\Big]{\sqrt{2n+\frac{1}{4}}-\frac{1}{2}}$. We use induction on the number of nodes in the graph to prove the claim. For the induction basis, we say the empty graph has burning number $0$. 

Now assume each single source DAG with $n<N$ nodes has burning number at most $b_n$. Consider a graph $D$ with $N$ nodes. First suppose the root source $s$ has eccentricity at most $b_n-1$. In that case, $D=\NbhP_{b_n-1}(s)$, and the burning number is at most $b_n$. Now suppose the source has eccentricity greater than $b_n-1$, then there exists a node $v$ of eccentricity $b_n-1$ (\cref{clm:eccentricity}). The neighbourhood $\NbhP_{b_n-1}(v)$ contains at least $b_n$ nodes, and $D'=D\setminus \NbhP_{b_n-1}(v)$ is again a single source DAG. As $b_n$ is the smallest solution to $n\leq \sum_{i=1}^b i$, we have $b_{n-b_n}\leq b_n-1$. Hence, by the induction hypothesis, $D'$ has burning number at most $b_n-1$. Taking the corresponding cover of $D'$, and adding $\NbhP_{b_n-1}(v)$, we see that $D$ has burning number at most $b_n$.
\end{proof}

\subsection{Strongly connected graphs}

\begin{lemma}\label{clm:CycleBurning}
The burning number of the directed cycle on $n$ nodes, $C_n$, is $\ceil[\Big]{\sqrt{2n+\frac{1}{4}}-\frac{1}{2}}$.
\end{lemma}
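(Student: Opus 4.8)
The plan is to prove matching upper and lower bounds on the burning number of $C_n$, both resting on the arithmetic fact---already verified in the proof for $P_n$ above---that $\ceil{\sqrt{2n+\frac14}-\frac12}$ is exactly the least integer $b$ with $n\le\sum_{i=1}^b i=\tfrac{b(b+1)}{2}$; write $b_n$ for this quantity.

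\emph{Upper bound.} I would note that $C_n$ is obtained from the directed path $P_n$ by adding a single arc, from the last vertex to the first. Adding arcs to a digraph enlarges (or leaves unchanged) every out-neighbourhood, so $\NbhP_{k}(v)$ computed in $P_n$ is contained in $\NbhP_{k}(v)$ computed in $C_n$, for every $v$ and $k$. Hence any cover $P_n=\bigcup_{i=1}^{b}\NbhP_{i-1}(v_i)$ is also a cover of $C_n$, and by the lemma on the directed path such a cover exists with $b=b_n$; so the burning number of $C_n$ is at most $b_n$. (Equivalently, one can build the cover by hand: label the vertices $0,\dots,n-1$ cyclically and, for $k=1,\dots,b_n$, cover a block of $b_n-k+1$ consecutive vertices---starting immediately after the previous block---using the out-neighbourhood of radius $b_n-k$ of its first vertex; the block sizes sum to $\tfrac{b_n(b_n+1)}{2}\ge n$, so the blocks run around the whole cycle.)

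\emph{Lower bound.} Here I would use the covering characterisation of the burning number together with a counting argument: in $C_n$ we have $\lvert\NbhP_{k}(v)\rvert\le k+1$ for all $v,k$, so a cover of size $b$ reaches at most $\sum_{i=1}^b i=\tfrac{b(b+1)}{2}$ vertices. If $b<b_n$ then $\tfrac{b(b+1)}{2}<n$, so no cover of size $b$ can exhaust $V(C_n)$; therefore the burning number of $C_n$ is at least $b_n$. Combining the two bounds yields the claimed value.

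I do not anticipate a genuine obstacle. The only points needing a little care are the translation between the burning-sequence and burning-assignment formulations---which the covering definition in \cref{sec:Preliminaries} already absorbs, including the caveat about not re-selecting a burning node---and checking that the small cases $n\le 2$ are not exceptional for a directed cycle. The explicit cyclic construction is the fiddliest ingredient, but it is avoided entirely by the arc-addition observation, which reduces the upper bound to the already-proved statement for $P_n$.
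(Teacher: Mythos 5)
Your proof is correct and follows essentially the same route as the paper, which simply says the argument is ``the same as the proof for the directed path'': a counting lower bound from $\lvert\NbhP_k(v)\rvert\le k+1$ and an upper bound from the explicit cover of total size $\tfrac{b(b+1)}{2}\ge n$. Your arc-addition observation reducing the upper bound to the path case is a clean way to make the paper's one-line proof explicit, but it is not a different argument.
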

\begin{proof}
The same as the proof for the directed path.
\end{proof}

\begin{theorem}
The burning number of a strongly connected graph is at most $\ceil[\Big]{\sqrt{2n+\frac{1}{4}}-\frac{1}{2}}$, and this bound is sharp.
\end{theorem}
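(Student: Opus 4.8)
The bound is sharp already for the directed cycle: $C_n$ is strongly connected, and by \cref{clm:CycleBurning} its burning number is exactly $\ceil[\Big]{\sqrt{2n+\frac14}-\frac12}$. So the whole content is the upper bound, and my plan is to reduce it to the single source DAG case of \cref{clm:BurningSingleSourceDAG}.

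The key observation is that the burning number is monotone under arc deletion: if $H$ is a spanning subgraph of $D$ (same vertex set, $A(H)\subseteq A(D)$), then the burning number of $D$ is at most that of $H$. Indeed, each out-neighbourhood $\NbhP_{k}(v)$ computed in $H$ is contained in the corresponding one computed in $D$, so any burning assignment covering $V(H)$ in $H$ also covers $V(D)$ in $D$; equivalently, running the same burning sequence in $D$ burns at least as many nodes at every step. Hence it suffices to exhibit, inside an arbitrary strongly connected $D$ on $n$ nodes, a spanning subgraph $H$ that is a single source DAG: then \cref{clm:BurningSingleSourceDAG} applied to $H$ gives burning number at most $\ceil[\Big]{\sqrt{2n+\frac14}-\frac12}$, and by monotonicity the same holds for $D$.

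Producing such an $H$ is easy. Fix any node $r$; since $D$ is strongly connected, every node is reachable from $r$, so a breadth-first search out of $r$ yields a spanning out-branching rooted at $r$, i.e.\ a spanning tree all of whose arcs point away from $r$. This digraph is acyclic and has $r$ as its unique source, hence is a single source DAG on $n$ nodes, which is exactly the hypothesis needed for \cref{clm:BurningSingleSourceDAG}.

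I do not expect a real obstacle here once the earlier results are available; the only points that need care are getting the direction of the monotonicity inequality right (adding arcs can only speed burning up, so passing to a subgraph cannot decrease the burning number) and checking that an out-branching qualifies as a single source DAG. A more hands-on alternative would be to imitate the induction in the proof of \cref{clm:BurningSingleSourceDAG} directly, peeling off a maximal out-neighbourhood of a well-chosen node; but deleting a neighbourhood from a strongly connected graph need not leave a strongly connected graph, so one would be forced to carry a weaker invariant through the induction — precisely the complication the spanning out-branching trick avoids.
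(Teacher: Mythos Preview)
Your proof is correct and follows essentially the same approach as the paper: both pick an arbitrary vertex, build a spanning single-source DAG of shortest paths out of it (the paper keeps all shortest-path arcs, you keep a BFS out-branching), invoke \cref{clm:BurningSingleSourceDAG}, and use monotonicity under arc deletion to transfer the bound back to $D$; sharpness comes from the cycle via \cref{clm:CycleBurning} in both. You spell out the monotonicity step more explicitly than the paper does, which is fine.
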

\begin{proof}
Take an arbitrary vertex $v$ of a strongly connected graph $G$ with $n$ nodes, and consider the subgraph with root $v$ and the arcs from all shortest paths from $v$ to all other nodes. This is a single source DAG with $n$ nodes, so it has burning number at most $\ceil[\Big]{\sqrt{2n+\frac{1}{4}}-\frac{1}{2}}$ by \cref{clm:BurningSingleSourceDAG}. This implies that the burning number of $G$ is also at most $\ceil[\Big]{\sqrt{2n+\frac{1}{4}}-\frac{1}{2}}$. By \cref{clm:CycleBurning}, this bound is sharp.
\end{proof}

\section{Complexity of computing the directed burning number}\label{sec:Complexity}
In this section, we consider the complexity of the directed graph burning problem. It is easy to see that the problem is NP-hard in general. Indeed, a reduction from the undirected version---which is NP-hard as well \cite{bessy2017burning}---can easily be given: take an undirected graph, and replace each edge by two arcs in both directions. Although this shows hardness, it does not highlight the differences between the directed and the undirected versions of the graph burning problem. 

Here, we restrict to graphs that are not strongly connected: trees and DAGs. We show that the problem remains NP-hard, even if we restrict to trees, but that this problem can be solved in FPT time. Then, we turn to DAGs, for which we show the problem is W[2]-complete with the burning number as parameter, which indicates there is probably no fixed parameter algorithm with this parameter. Finally, we give two algorithms that, considering the previous, solve the problem relatively efficiently.

\subsection{Tree Burning}
In this subsection, we show that the directed burning problem is NP-hard, even when considering only trees. Like for undirected graph burning \cite{bessy2017burning}, the proof uses a reduction from {\sc Distinct 3-Partition}, which is a strongly NP-hard problem \cite{hulett2008multigraph}.

\medskip
\fbox{
\parbox{0.8\linewidth}{
{\sc Distinct 3-Partition}\\
{\bf Input:} A finite set $A={a_1, a_2,\ldots, a_{3n}}$ of positive distinct integers, and a positive integer $B$ where $\sum_{i=1}^{3n}a_i=nB$, and $B/4<a_i<B<2$, for $1\leq i \leq 3n$.\\
{\bf Output:} A partition of $A$ in $n$ triples, such that the elements of each triple sum up to $B$; NO otherwise.}
}
\medskip

The reduction in the proof of the following lemma constructs a tree from a sequence of small components. These components are paths $P_n$ on $n$ nodes, and \emph{2-legged spiders} $\Lambda_n$ constructed from two copies of $P_n$ by identifying their source nodes.

\begin{lemma}
{\sc Tree Burning} is NP-complete.
\end{lemma}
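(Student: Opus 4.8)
The plan is to prove membership in NP and then NP-hardness by a polynomial-time reduction from {\sc Distinct 3-Partition}, adapting the undirected construction of Bessy et al.\ to directed trees. Membership is immediate: a burning sequence $(x_1,\dots,x_b)$ is a certificate of polynomial size, and simulating the burning process for $b$ steps and checking that no node remains unburned takes polynomial time. For hardness we use that {\sc Distinct 3-Partition} is \emph{strongly} NP-hard, so the input integers $a_1,\dots,a_{3n}$ and $B$ may be encoded in unary; the tree we construct will have $\poly(n,B)$ nodes, hence size polynomial in the input, so the reduction runs in polynomial time.

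Given an instance with $\sum_i a_i = nB$ and $B/4 < a_i < B/2$, I would build a rooted directed tree $T$ from directed paths $P_m$ and $2$-legged spiders $\Lambda_m$, together with a target $k$, so that $T$ has burning number at most $k$ exactly when $A$ splits into $n$ triples each of sum $B$. The tree has a \emph{garbage part}, a collection of paths and spiders whose total number of nodes equals the combined capacity of all burning ranges \emph{except} a block of $n$ consecutive ones (range $r$ can cover $r$ nodes of a path), so those excepted ranges are forced onto the garbage with no slack; and an \emph{encoding part}, containing for each $a_i$ an \emph{element gadget} of size linear in $a_i$, attached to a skeleton at depths tuned so that each of the $n$ remaining ranges has effective budget exactly $B$ and so that one burning source of such a range can cover three element gadgets simultaneously precisely when their encoded values sum to at most $B$. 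Since $|V(T)| = \sum_{r=1}^{k} r = k(k+1)/2$ by construction, any burning sequence of length $k$ must waste nothing. The forward direction is then routine: from a valid $3$-partition, spend the excepted ranges on the garbage part and, for the $m$-th triple $\{a_i,a_j,a_\ell\}$, place the $m$-th remaining range at the root of the sub-skeleton feeding the three corresponding element gadgets; the no-slack bookkeeping shows all of $T$ burns.

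The heart of the proof is the backward direction: turning an arbitrary burning sequence of length at most $k$ into a valid $3$-partition. I would first \emph{normalize} it via exchange arguments tailored to the gadgets --- for instance that in a directed path a source placed at the top dominates one placed lower, and that two sources serving overlapping regions can be pulled apart --- so that without loss of generality every source sits at a gadget root and no source straddles the boundary between independent gadgets. A counting argument then does the rest: a length-$k$ sequence must partition $V(T)$ perfectly into out-neighbourhoods $\NbhP_{r-1}(\cdot)$, which forces the excepted ranges onto the garbage part (nothing else has room for them) and leaves exactly the $n$ remaining ranges, each of budget $B$, to cover the encoding part; hence each covers a set of element gadgets whose values sum to at most $B$. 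Since all $3n$ gadgets must be covered and the values total $nB$, every such range covers gadgets summing to exactly $B$, and $B/4 < a_i < B/2$ forces each such set to consist of exactly three elements. Reading off these $n$ sets gives the desired partition.

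I expect the main obstacle to be engineering the encoding part so that the reduction is genuinely tight: the skeleton, the element gadgets, and the depth offsets must be arranged so that ``one range covers three gadgets if and only if their values sum to at most $B$'' holds with no unintended shortcuts --- in particular ruling out a source that picks up fragments of several gadgets along unforeseen tree paths, or that effectively subsidises a neighbouring range --- and the triangular-number bookkeeping relating the garbage part, the encoding part and $k$ must be consistent uniformly in $n$ and $B$. The normalization lemma and the final counting are fiddly but routine once the gadgets are fixed.
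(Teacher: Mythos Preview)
Your high-level strategy matches the paper's: reduce from {\sc Distinct 3-Partition}, use strong NP-hardness so the tree may have $\poly(B)$ nodes, and build $T$ so that the main path has exactly $\sum_{r=1}^{k}r$ nodes, forcing a length-$k$ burning sequence to tile it perfectly by out-neighbourhoods. NP membership is fine as you state it.

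The genuine gap is the \emph{direction} of your encoding. You make each $a_i$ into an element gadget and reserve $n$ burning ranges of ``effective budget $B$'', intending that one such range covers three gadgets precisely when their values sum to $B$. In a directed tree this is essentially unworkable: once $T$ is fixed, the family of balls $\NbhP_{r-1}(v)$ is fixed, so which triples of gadgets are simultaneously coverable by a single source is determined by the tree, not by the instance. For \emph{every} admissible triple $\{a_i,a_j,a_\ell\}$ to be a candidate, the gadgets would all have to sit close below a common ancestor --- but then a source there reaches far more than three of them, and your no-slack counting collapses. This is exactly the freedom DAGs have and trees lack; you even flag it as ``the main obstacle'', but it is not an engineering detail, it is a structural obstruction. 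The paper resolves it by \emph{inverting} the roles: with $m=\max A$ and $Z=[m]\setminus A$, it chains spiders $\Lambda_{z}$ for $z\in Z$, long spiders $\Lambda_{m+1},\dots,\Lambda_{m+|Z|+n-1}$, and $n$ copies of the directed path $P_B$ along one main path, with target $k=m+|Z|+n-1$. The long spider legs pin each range $m+1,\dots,k$ to a unique spider source; the spiders $\Lambda_z$ absorb exactly the ranges in $Z$; what remains is precisely the multiset of ranges $\{a_1,\dots,a_{3n}\}$, which must cover the $n$ copies of $P_B$. Three ranges $a_i,a_j,a_\ell$ cover one $P_B$ iff $a_i+a_j+a_\ell=B$, and since totals match, a cover exists iff a valid $3$-partition does. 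So the fix is not in your normalization or counting --- those are right --- but in swapping which side (elements versus triple-slots) becomes the burning ranges.
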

\begin{proof}
{\sc Tree Burning} is in NP as a sequence of burning nodes is a certificate that can be checked in polynomial time, so we focus on NP-hardness.

We reduce from {\sc 3-Partition}. Let $A=\{a_i\}_{i=1}^{3n}$ be an instance of {\sc 3-Partition} with $B=\frac{1}{n}\sum a_i$ and $m=\max A$, and let $Z=[m]\setminus A=\{z_i\}_{i=1}^{m-3n}$ be the set of integers up to $m$ not contained in $A$. Now create a directed graph from the following sequence of graphs by attaching the source of each graph to a source of the preceding graph with an arc between these nodes:
\[\Lambda_{z_1},\Lambda_{m+1},\ldots,\Lambda_{z_{|Z|}},\Lambda_{m+|Z|},P^1_{B},\Lambda_{m+|Z|+1},P^2_{B},\ldots,P^{n-1}_{B},\Lambda_{m+|Z|+n-1},P^n_{B}\]
Note that each part corresponding to an element of $Z$ or of the $n$ copies of $P_B$ is followed by a 2-legged spider. The instance of {\sc Tree Burning} consists of this graph, and the integer $m+|Z|+n-1$. Note that the graph has less than $2(1+\cdots+m+|Z|+n-1)<2(1+\cdots+3m)=O(m^2)$ edges. Because {\sc 3-Partition} is strongly NP-hard, we may assume the instance size is polynomial in $B$. Hence, as $m<B$, the reduction can be performed in polynomial time.

We now prove the reduction provides a yes-instance of {\sc Tree Burning} iff the instance of {\sc Distinct 3-Partition} is a yes-instance. First, note that the main path has length exactly $1+\cdots+(m+|Z|+n-1)$. Hence, to burn the graph in exactly $m+|Z|+n-1$ steps, all the burning neighbourhoods must be disjoint, and all chosen burning nodes must lie on this path. Note that the legs of the $\Lambda_k$ that do not correspond to elements of $Z$ are of length $m+1,\ldots,m+|Z|+n-1$: all longest ranges. Hence, they must be burned at a time step such that this leg burns exactly to the end. This implies that each top node of a $\Lambda_k$ must be burned in step $m+|Z|+n-k$. The remaining lengths are exactly the $a_i$, and they can cover the $P_B$s exactly if and only if the {\sc 3-Partition} instance is solvable (a partition cannot contain parts of size other than three, because $B/4<a_i<B<2$, for $1\leq i \leq 3n$).
\end{proof}

\begin{algorithm}[H]
 \KwData{A Tree $T$ and a set of allowed burning ranges $R$.}
 \KwResult{A map $\phi:R\to V(T)$ if $T$ can be burned by using $R$, NO otherwise.}
Let $s$ be a sink of $T$ furthest from the source $\rho$\;
\If{$d(\rho,s)\leq \max(R)-1$}{
 \Return $\{r\mapsto\rho: r\in R\}$ \;
}
\For{$r$ in $R$}
{
     Let $v$ be the node with $d(v,s)=r-1$\;
     Let $T'$ be $T$ with all nodes of distance $r-1$ away from $v$ removed\;
     Calculate {\sc TreeBurning}$(T',R\setminus\{r\})$\;
     \If{the result is a map $\phi'$}{
       let $\phi$ be the map obtained from $\phi'$ by adding $r\mapsto v$\;
       \Return $\phi$\;
     }
}
\Return NO\;
\caption{{\sc TreeBurning}$(T,R)$}\label{alg:TreeBurning}
\end{algorithm}

\begin{lemma}\label{clm:GoodAssignmentTree}
Let $T$ be a tree, $R$ a set of burning ranges which can burn $T$, and $s$ a sink furthest from the source $\rho$. Then, either $d(\rho,s)\leq\max(R)-1$, or there exists a burning assignment such that, for some $r\in R$, the node $v$ with $d(v,s)=r-1$ is burned with range $r$.
\end{lemma}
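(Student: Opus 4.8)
Assume we are in the nontrivial case $d(\rho,s)>\max(R)-1$, write $L:=d(\rho,s)$, and let $\rho=p_0,p_1,\ldots,p_L=s$ be the unique directed path from the source to $s$. The plan is to start from an arbitrary burning assignment $\phi:R\to V(T)$ with $V(T)=\bigcup_{r\in R}\NbhP_{r-1}(\phi(r))$, identify the range $r^\ast\in R$ that is responsible for burning $s$, and then argue that the node $\phi(r^\ast)$ can be slid up the path $P$ to the node $v$ with $d(v,s)=r^\ast-1$ without ever shrinking the burned set. Note first that $v$ is well defined and unique: since $s$ is a sink, the only nodes that reach $s$ are its ancestors, i.e.\ the nodes on $P$, and among these exactly $p_{L-(r^\ast-1)}$ is at distance $r^\ast-1$ from $s$; this requires $L-(r^\ast-1)\ge 0$, which holds because $r^\ast\le\max(R)\le L$.

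The key preliminary step is an eccentricity computation along $P$. Because $s$ is a sink at maximum distance from $\rho$ among sinks, it is in fact at maximum distance from $\rho$ among all nodes: a node strictly farther from $\rho$ would have an out-neighbour even farther, and a non-sink has an out-neighbour, so the farthest node is necessarily a sink. Hence $d(\rho,x)\le L$ for every node $x$. Now fix $j$. Any $x$ reachable from $p_j$ lies in the subtree rooted at $p_j$, so the directed path from $\rho$ to $x$ passes through $p_j$ and $d(p_j,x)=d(\rho,x)-j\le L-j$; combined with $d(p_j,s)=L-j$ this shows the eccentricity of $p_j$ equals $L-j$. Consequently $\NbhP_{L-j}(p_j)$ is exactly the set of all nodes reachable from $p_j$, and if $p_k$ is a descendant of $p_j$ on $P$ then this set contains the set of all nodes reachable from $p_k$ (just prepend the path $p_j\rightsquigarrow p_k$). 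This is the one place where a little thought is needed; everything else is bookkeeping about reachability in a tree, and it is precisely this fact that makes sliding a burning node up along $P$ free of charge.

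Finally I would carry out the exchange. Since $\phi$ burns $T$, the sink $s$ lies in $\NbhP_{r^\ast-1}(\phi(r^\ast))$ for some $r^\ast\in R$; as $s$ is a sink, $\phi(r^\ast)=p_i$ for some $i$ with $L-i\le r^\ast-1$, i.e.\ $i\ge L-(r^\ast-1)$, so $v=p_{L-(r^\ast-1)}$ is an ancestor of (or equal to) $p_i$. By the preliminary step the eccentricity of $v$ is $r^\ast-1$, hence $\NbhP_{r^\ast-1}(v)$ is the whole subtree rooted at $v$, which contains the subtree rooted at $p_i$, which in turn contains $\NbhP_{r^\ast-1}(p_i)$. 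Therefore the assignment $\phi'$ obtained from $\phi$ by setting $\phi'(r^\ast)=v$ and leaving all other values unchanged satisfies $\bigcup_{r\in R}\NbhP_{r-1}(\phi'(r))\supseteq\bigcup_{r\in R}\NbhP_{r-1}(\phi(r))=V(T)$, so $\phi'$ still burns $T$, and by construction $\phi'(r^\ast)=v$ with $d(v,s)=r^\ast-1$, which is exactly the claimed burning assignment.
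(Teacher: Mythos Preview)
Your proof is correct and follows essentially the same approach as the paper: take any burning assignment, find the range $r^\ast$ whose ball covers $s$, and slide its center up along the $\rho$--$s$ path to the node at distance exactly $r^\ast-1$ from $s$, using the maximality of $d(\rho,s)$ to conclude that this node's $(r^\ast-1)$-neighbourhood is its entire subtree and hence contains the original ball. Your treatment of the eccentricity of the $p_j$ is in fact more explicit and careful than the paper's somewhat terse version of the same step.
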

\begin{proof}
First note that if $d(\rho,s)\leq\max(R)-1$, then assigning $\max(R)$ to $\rho$ and assigning all other ranges arbitrarily gives a burning assignment that burns $T$. So, for the rest of the proof, we assume $\max(R)-1<d(\rho,s)$.

Let $\phi:R\to V(T)$ be a burning assignment, and let $v$ be a node with $\phi(r)=v$ and $s\in\NbhP_{r-1}(v)$. If $d(v,s)=r-1$ or $v=\rho$, then the statement of the lemma holds. Hence, suppose that $d(v,s)<r-1$. As $d(v,s)<r-1\leq\max(R)-1<d(\rho,s)$, there is a node $u$ on the path from $\rho$ to $v$ with $d(u,s)=r-1$. Because the distance from $\rho$ to $s$ is maximal among all sinks, $\NbhP_{r-1}(x)=\NbhP_{d(v,s)}(x)$ for all $x$ above $s$ with $d(x,s)\leq r-1$, so in particular, the same holds for $u$. As $\NbhP_{r-1}(v)\subseteq\NbhP_{r-1}(u)$, reassigning $r$ to $u$ gives a new burning assignment of $R$ that burns $T$ for which the condition of the lemma holds.
\end{proof}

\begin{theorem}
Algorithm~\ref{alg:TreeBurning} solves {\sc Tree Burning} in $O(b! \poly(n))$ time.
\end{theorem}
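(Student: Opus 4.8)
The plan is to prove correctness first and then bound the running time.

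\medskip
\noindent\textbf{Correctness.}
First I would argue that every map returned by the algorithm is indeed a valid burning assignment of $T$ using ranges $R$; this is essentially by construction and an easy induction on $|R|$. In the base case, if $d(\rho,s)\le\max(R)-1$, then since $s$ is a furthest sink from the source, every node of $T$ lies within distance $\max(R)-1$ of $\rho$, so assigning $\max(R)$ to $\rho$ (and the other ranges arbitrarily) burns all of $T$. In the recursive case, if the call {\sc TreeBurning}$(T',R\setminus\{r\})$ returns a map $\phi'$, then by the induction hypothesis $\phi'$ burns $T'=T\setminus\NbhP_{r-1}(v)$ using $R\setminus\{r\}$, and adding $r\mapsto v$ covers exactly $\NbhP_{r-1}(v)$, so $\phi$ burns $T$. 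For the converse direction (if $T$ can be burned by $R$, the algorithm finds a witness), I would invoke \cref{clm:GoodAssignmentTree}: either $d(\rho,s)\le\max(R)-1$, in which case the base case fires, or there is a burning assignment in which the node $v$ at distance $r-1$ from $s$ is burned with some range $r\in R$. In the latter case, removing $\NbhP_{r-1}(v)$ from $T$ leaves a tree $T'$ burnable by $R\setminus\{r\}$, the loop over $r\in R$ tries exactly this $r$, and by induction the recursive call succeeds, so the algorithm returns a map. Hence the algorithm returns a map if and only if one exists.

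\medskip
\noindent\textbf{Running time.}
Let $T(b)$ denote the worst-case running time on an instance with $|R|=b$ ranges and $n$ nodes. Each invocation does $O(\poly(n))$ work outside the recursive calls: finding a furthest sink $s$ (a BFS/DFS from $\rho$), and, for each of the $\le b$ choices of $r$, locating the node $v$ with $d(v,s)=r-1$ along the path from $\rho$ to $s$ and constructing $T'$ by deleting $\NbhP_{r-1}(v)$ — all polynomial in $n$. This yields the recurrence $T(b)\le b\cdot T(b-1)+O(\poly(n))$, which unrolls to $T(b)=O(b!\poly(n))$: there are at most $b\cdot(b-1)\cdots 1 = b!$ leaves in the recursion tree, each contributing $O(\poly(n))$, and the internal nodes contribute no more. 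I would note that the polynomial factor does not blow up across levels because the recursion depth is only $b\le n$ and the subtrees $T'$ have at most $n$ nodes.

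\medskip
\noindent\textbf{Main obstacle.}
The routine parts — the by-construction soundness direction and the arithmetic of the recurrence — are straightforward. The crux of the argument is the completeness direction, and it rests entirely on \cref{clm:GoodAssignmentTree}: one must be sure that it is \emph{safe} to commit, at each recursion step, to burning the node at distance exactly $r-1$ from the furthest sink $s$ with range $r$, for \emph{some} choice of $r$ in the current range set. The subtlety is that after deleting $\NbhP_{r-1}(v)$ the resulting $T'$ is still a tree with a well-defined source, its furthest sink may change, and one must check that \cref{clm:GoodAssignmentTree} can be reapplied — this is where the induction on $|R|$ must be set up carefully so that the hypothesis "$T'$ is burnable by $R\setminus\{r\}$" is exactly what feeds the recursive call.
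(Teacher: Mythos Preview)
Your proposal is correct and follows essentially the same approach as the paper: completeness via \cref{clm:GoodAssignmentTree}, soundness by construction, and the running-time bound from the recurrence $T(b)\le b\cdot T(b-1)+\poly(n)$ giving at most $b!$ recursive calls with polynomial work each. The paper's own proof is simply a terser version of what you outline (it omits the explicit soundness direction and the check that $T'$ is again a tree, which indeed holds because $\NbhP_{r-1}(v)$ is the full subtree rooted at $v$ when $s$ is a furthest sink).
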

\begin{proof}
We claim the algorithm can decide whether the burning number of a tree $T$ is at most $b$ by calling {\sc TreeBurning}$(T,[b])$. 

By \cref{clm:GoodAssignmentTree}, we have the following: if a tree can be burned with the given ranges $R$, then it can either be burned with one range at the root, or there is a solution that does not overshoot the furthest sink. The algorithm recursively searches for solutions of that type, and will therefore find a solution if it exists. 

For the running time, note that each time the algorithm branches, the set of ranges reduces by one. This means there are at most $b!$ recursive calls of the algorithm in total. Each call takes polynomial time, so the running time is $O(b! \poly(n))$.
\end{proof}

\subsection{DAG Burning}
In the previous section, we saw that {\sc Tree Burning} is NP-hard, but solvable in FPT time. One could hope that an fixed-parameter algorithm also exists for {\sc DAG Burning} with the burning number as the parameter. However, as we will show in this section, this is quite unlikely, as {\sc DAG Burning} is W[2]-complete. This implies that the increased hardness is a consequence of nodes with higher in-degree. Hence, we give an fixed-parameter algorithm for {\sc DAG Burning} with a parameter combining the burning number and the `extra indegree'.

\medskip
\fbox{
\parbox{0.8\linewidth}{
{\sc Set Cover}\\
{\bf Input:} A universe $\mathcal{U}$ and a set $\mathcal{S}$ of subsets $S\subseteq \mathcal{U}$ with $\cup_{S\in\mathcal{S}}S=\mathcal{U}$.\\
{\bf Parameter:} A natural number $k$, the number of subsets in the cover.\\
{\bf Output:} A cover $C\subset\mathcal{S}$ of $\mathcal{U}$ with $|C|\leq k$ if it exists; NO otherwise.}
}
\medskip

To prove W[2]-hardness, we give a parameterized reduction from {\sc Set Cover} to {\sc DAG Burning}, and to prove the problem is in W[2], we give a parameterized reduction in the other direction. As {\sc Set Cover} is W[2]-complete (\cite{cygan2015parameterized} Theorem~13.21), this suffices to show that {\sc DAG Burning} is W[2]-complete as well.

\begin{lemma}
The {\sc DAG Burning} problem is W[2]-hard with the burning number as parameter.
\end{lemma}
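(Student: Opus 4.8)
The plan is to give a parameterized reduction from {\sc Set Cover} to {\sc DAG Burning}, mapping an instance $(\mathcal{U},\mathcal{S},k)$ to a DAG $D$ together with a budget $b$ that is a function of $k$ only (so that an FPT reduction results; in fact $b$ should depend on $k$ and possibly $\log|\mathcal{S}|$ or similar, but the crucial point is that the number of ``meaningful'' burning choices is tied to $k$). The basic idea: encode each set $S\in\mathcal{S}$ as a source node $w_S$, encode each element $u\in\mathcal{U}$ as a sink node $y_u$, and add an arc $w_S\to y_u$ whenever $u\in S$. Burning $w_S$ with range $\geq 2$ then burns exactly the elements of $S$ (its out-neighbours), so a family of sources whose out-neighbourhoods cover all the $y_u$ corresponds exactly to a set cover. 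The subtlety is forcing the burning sequence to actually spend its budget on the ``set'' sources rather than wasting ranges elsewhere, and preventing cheap coverage of the element-sinks by other means.

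The key steps, in order, are as follows. First I would fix the ranges available: with a burning number of $b$ the cover-formulation gives ranges $1,2,\ldots,b$. I want roughly $k$ of those ranges to be ``large enough'' (say $\geq 2$) to be usable on the set-gadget, and I want the remaining, shorter ranges to be forced elsewhere. To do this I attach to the construction a rigid ``spine'' or collection of directed paths of carefully chosen lengths — analogous to the $\Lambda_n$ spiders in the {\sc Tree Burning} reduction — so that ranges $1,\dots,b-k$ (or whatever the complement is) are uniquely consumed by burning the designated source of each path exactly at the time step that makes that path burn precisely to its end, leaving no slack. Second, I connect every set-source $w_S$ to this spine (e.g. make the spine sources reachable only after, or the $w_S$ themselves hang off the spine) so that there is a single global source and the graph really is a DAG, and so that all of $\{w_S\}$ and $\{y_u\}$ must be covered by the $k$ leftover ranges. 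Third, since each leftover range, used on any node, can cover at most the out-neighbourhood of that node, and the only nodes whose out-neighbourhood contains element-sinks are the $w_S$ (and we give each $w_S$ out-degree exactly $|S|$, with the $y_u$ having no useful out-arcs), the only way to cover all $y_u$ with $k$ ranges is to pick $k$ sources $w_{S_1},\dots,w_{S_k}$ with $\bigcup S_i=\mathcal{U}$. Fourth, I verify the converse: given a set cover of size $\le k$, burn the spine optimally with the short ranges and burn the chosen $w_{S_i}$ with ranges $\ge 2$; checking that all element-sinks, all set-sources, the spine, and the global source get burned within $b$ steps. Then conclude, noting $b = f(k)$, that this is a valid parameterized reduction and hence {\sc DAG Burning} is W[2]-hard.

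I would also double-check a couple of bookkeeping points that make the gadget honest: every $w_S$ must itself get burned, which is automatic if it is chosen, but if a set is \emph{not} in the optimal cover its source $w_S$ still needs to burn — so either the spine must reach all $w_S$ (add arcs spine$\,\to w_S$) with enough residual range, or I give $w_S$ its own in-arc from an earlier-burning node; I need to size the spine so this costs nothing extra. Symmetrically, the element-sinks $y_u$ must have no short path from the spine that would let a small leftover range cover them for free; giving $y_u$ in-degree only from the $\{w_S\}$ (and making the spine strictly ``after'' the sinks is impossible in a DAG, so instead I route any spine$\to y_u$ connections, if needed for connectivity, through long enough paths that no available range reaches them except via some $w_S$).

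The main obstacle I expect is exactly this rigidity argument: proving that in \emph{any} burning assignment achieving $b$ steps, the short ranges are \emph{forced} onto the spine and therefore the $k$ long ranges are all that remain for the set/element gadget. This is the analogue of the ``all neighbourhoods must be disjoint, all chosen nodes on the main path'' argument in the {\sc Tree Burning} proof, but it is harder here because a DAG gives the adversary more freedom (a node can be reached along several paths, so an out-neighbourhood can be large and a clever choice might cover spine-pieces and gadget-pieces simultaneously). Handling this will require choosing the spine path-lengths to be mutually ``incompatible'' (e.g. all distinct, all larger than any distance within the set/element gadget, summing to exactly the slack) so that a counting/disjointness argument on the total number of nodes leaves no room for cross-coverage — mirroring how {\sc Distinct 3-Partition} is used for trees, but adapted to the W[2] setting where the interesting ranges number only $\Theta(k)$.
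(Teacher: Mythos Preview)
Your overall approach --- reduce from {\sc Set Cover}, encode each set $S$ as a node with arcs to element-sinks $y_u$ for $u\in S$, and add an enforcer gadget to pin down the remaining ranges --- is exactly the paper's strategy. But your proposal has a real gap at the point you yourself flag as the ``main obstacle'', and your aside that ``$b$ should depend on $k$ and possibly $\log|\mathcal{S}|$'' is a warning sign: for a parameterized reduction the new parameter must be bounded by a computable function of $k$ alone. Any dependence of $b$ on $|\mathcal{S}|$ or $|\mathcal{U}|$ kills the argument.

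The paper resolves both issues with a much simpler enforcer than the {\sc Tree Burning}-style spine you propose. Set $b=k+2$. Create a single root $\rho$, and hang from it $b$ ``enforcer'' directed paths each of length $k+1$, \emph{and also} one directed path of length $k+1$ for every $S\in\mathcal{S}$; from the last node of the $S$-path add arcs to the element-sinks $y_u$ for $u\in S$. Finally, append one extra leaf to one of the enforcer paths (so that path has length $k+2$). The rigidity argument is then pure pigeonhole, not an arithmetical disjointness count: there are $b$ enforcer paths but only $b$ ranges, so if $\rho$ does not receive the maximum range $b$ then at least one enforcer path is unreachable and unburnable. Hence range $b$ goes to $\rho$; this covers everything within distance $k+1$ of $\rho$, which is \emph{all} set-path nodes (so your worry about burning unchosen $w_S$ evaporates) but \emph{not} the element-sinks, which sit at distance $k+2$. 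Range $1$ is forced onto the extra leaf. That leaves exactly the $k$ ranges $2,\ldots,k+1$ free, each $\geq 2$, and placing any of them on the tip of an $S$-path burns precisely $\{y_u:u\in S\}$; so the remaining choices encode a size-$k$ set cover.

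In short: your plan is right, but the enforcer you sketch (mutually incompatible path lengths, counting slack) is inherited from a strongly-NP-hard reduction and is both overcomplicated and in danger of making $b$ non-constant in the instance size. The clean fix is the fan-of-paths-from-a-root gadget, which gives $b=k+2$ and a one-line pigeonhole forcing argument.
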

\begin{proof}
We give a parameterized reduction from {\sc Set Cover}. Let $(\mathcal{U},\mathcal{S},k)$ be an instance of {\sc Set Cover}, we construct an instance $(D,b)$ of {\sc DAG Burning} as follows. We set $b=k+2$, and $D$ to the DAG consisting of the nodes
\[\left\{v_A^1,\ldots,v_A^{k+1}\right\}_{A\in X} \cup \left\{v_u\right\}_{u\in\mathcal{U}}\cup\{\rho,v_{(\rho,1)}^{k+2}\},\]
and the edges
\[\left\{(v_S,v_u): u\in S\right\}_{S\in\mathcal{S}}\cup\left\{(v_A^i,v_A^{i+1}): i\in[k]\right\}_{A\in X}\cup\left\{(\rho,v_{A}^1)\right\}_{A\in X}\cup\{(v_{(\rho,1)}^{k+1},v_{(\rho,1)}^{k+2})\},\]
where $X=\{(\rho,j)\}_{j=1}^{b}\cup \mathcal{S}$ (e.g., \cref{fig:DAGBurning_SetCover}). This graph can be constructed in polynomial time, and $b(k)=k+2$ is a computable function. Hence, the only thing left to check is that $(D,b)$ is a yes-instance of {\sc DAG Burning} if and only if $(\mathcal{U},\mathcal{S},k)$ is a yes-instance of {\sc Set Cover}.

First note that, to burn all paths consisting of the $v_{(\rho,j)}^i$ nodes, the source $\rho$ needs to be burned in the first time step. Otherwise, the burning sequence needs to contain a node in each of these paths. Then, the remaining node $v_{(\rho,1)}^{k+2}$ can be burned in the last step by adding it in the burning sequence; doing it earlier does not gain any extra burned nodes, so a burning sequence can be assumed to do this. Lastly, we have the second to the $(k+1)$-th element of the burning sequence. To burn the nodes $\{v_u\}_{u\in\mathcal{U}}$, these remaining elements have to be chosen from the $v_{S_j}^i$; in fact, we may choose to pick them from the $v_{S_j}^{k+1}$, as choosing higher nodes will not result in any more nodes being burned in the end. It is easy to see that a choice of $k$ of these nodes corresponds to a choice of $k$ subsets from $\mathcal{S}$, and that all nodes $v_u$ are burned iff these corresponding subsets form a cover of $\mathcal{U}$. Therefore, $(D,b)$ is a yes-instance of {\sc DAG Burning} if and only if $(\mathcal{U},\mathcal{S},k)$ is a yes-instance of {\sc Set Cover}.
\end{proof}

\begin{figure}
    \centering
    \includegraphics[width=\textwidth]{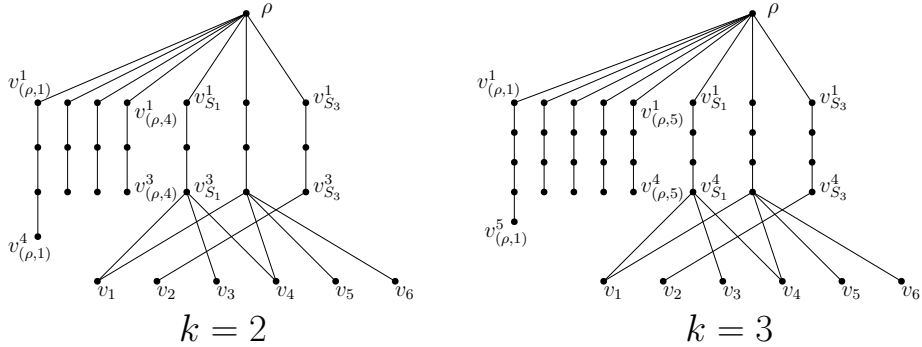}
    \caption{DAG burning W[2]-hardness reduction. The constructed instances of {\sc DAG Burning} for the {\sc Set Cover} instances $([6],\mathcal{S},2)$ and $([6],\mathcal{S},3)$ with $\mathcal{S}=\{\{1,3,4\},\{1,4,5,6\},\{2\}\}$. All arcs are directed down. The first is a no-instance, the second a yes-instance.}
    \label{fig:DAGBurning_SetCover}
\end{figure}

\begin{lemma}
The {\sc DAG Burning} problem is in W[2] with the burning number as parameter.
\end{lemma}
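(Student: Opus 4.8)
The plan is to exhibit a parameterized reduction from \textsc{DAG Burning} to \textsc{Set Cover}, so that the W[2]-membership of \textsc{DAG Burning} follows from that of \textsc{Set Cover} (\cite{cygan2015parameterized}, Theorem~13.21). Given an instance $(D,b)$ of \textsc{DAG Burning}, the key observation is that the covering definition of the burning number already looks like a set cover problem: $D$ has burning number at most $b$ iff there is a burning assignment $\phi:[b]\to V(D)$ with $V(D)=\bigcup_{i=1}^b\NbhP_{i-1}(\phi(i))$. The obstacle is that in a set cover instance the sets are an \emph{unstructured} collection, whereas here the available ``sets'' are the neighbourhoods $\NbhP_{i-1}(v)$, and the \emph{range} $i-1$ at which a vertex is used is tied to its \emph{position} in the assignment; a plain $\{\NbhP_{j}(v):v\in V, 0\le j\le b-1\}$ would let us use $b$ sets all of which are full out-neighbourhoods $\NbhP_{b-1}(v)$, which is not the same problem.

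To handle this, I would build the \textsc{Set Cover} instance with a universe that records not only which vertices must be burned but also a ``budget'' of ranges to be consumed. Concretely, take $\mathcal{U}=V(D)\cup\{r_1,\ldots,r_b\}$ where $r_1,\ldots,r_b$ are $b$ fresh elements, one per available range. For each vertex $v\in V(D)$ and each $j\in\{0,1,\ldots,b-1\}$ put a set $S_{v,j}=\NbhP_{j}(v)\cup\{r_{j+1}\}$ into $\mathcal{S}$; also add singleton sets $\{r_i\}$ for technical completeness, or better, handle the ranges by making the $r_i$-elements force a bijection. Set the parameter $k=b$. A cover of size $\le b$ must use at least one set containing each $r_i$, hence exactly one set $S_{v_i,i-1}$ for each $i$ (no slack), which is precisely a burning assignment $\phi(i)=v_i$; and it covers all of $V(D)$ iff that assignment burns $D$. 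Conversely a burning assignment yields such a cover of size exactly $b$. The map $(D,b)\mapsto(\mathcal{U},\mathcal{S},b)$ is clearly polynomial-time computable and the parameter $k=b$ depends only on $b$, so this is a valid parameterized reduction.

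The step I expect to require the most care is ensuring that the $r_i$-gadget genuinely forces the cover to pick \emph{exactly one} set of each range-type, i.e.\ that no solution can cheat by covering some $r_i$ ``for free'' or by using two sets of the same range to save one elsewhere. Since $|\{r_1,\ldots,r_b\}|=b=k$ and these $b$ elements are pairwise separated across the sets (each $S_{v,j}$ meets exactly one $r_i$, namely $r_{j+1}$), any cover of size $\le b$ contains, for each $i$, at least one set meeting $r_i$, and by counting this forces exactly one such set and no others, so the chosen sets are $S_{v_1,0},\ldots,S_{v_b,b-1}$ in some order of ranges. From there the equivalence with the covering definition of burning is immediate, and combined with the preceding lemma (W[2]-hardness) this shows \textsc{DAG Burning} is W[2]-complete.
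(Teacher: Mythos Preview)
Your proposal is correct and essentially identical to the paper's proof: both reduce to \textsc{Set Cover} by taking $\mathcal{U}=V(D)\cup\{x_1,\dots,x_b\}$ and $\mathcal{S}=\{\NbhP_{i-1}(v)\cup\{x_i\}:v\in V(D),\,i\in[b]\}$ with $k=b$, using the $b$ range-markers to force one set per range. The singleton sets $\{r_i\}$ you mention are unnecessary (every $r_i$ is already covered by some $S_{v,i-1}$) but harmless, and your counting argument for the correctness of the gadget is exactly what the paper invokes.
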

\begin{proof}
We give a parameterized reduction to {\sc Set Cover}. Let $(D,b)$ be an instance of {\sc DAG Burning}, we construct an instance $(\mathcal{U},\mathcal{S},k)$ of {\sc Set Cover} as follows. Let $\mathcal{U}$ be the set consisting of all nodes of $D$ together with a nodes $x_i$ for each $i\in[b]$. The subsets $\mathcal{S}$ are the burning neighbourhoods of all nodes, i.e.,
\[\left\{\NbhP_{i-1}(v)\cup\{x_i\}: v\in V(D), i\in[b] \right\},\]
lastly, we set $k:=b$. As neighbourhoods can be calculated in polynomial time, and there are $nb<n^2$ neighbourhoods to compute, the instance can be produced in polynomial time. Hence, the remaining part is to prove that $(D,b)$ is a yes-instance of {\sc DAG Burning} if and only if $(\mathcal{U},\mathcal{S},k)$ is a yes-instance of {\sc Set Cover}.

To cover the set $V(D)\cup\{x_i\}_{i=1}^b$ with $b$ sets, we must choose exactly one set containing $x_i$ for each $i$.
This corresponds to burning the $i$-neighbourhood for a chosen node for each $i=1\ldots,b$.
That is, if there is a set cover, then we can burn $D$ in $b$ steps.
The other direction is analogous: if we can burn $D$ in $b$ steps, then the corresponding sets will be a set cover.
Therefore, $(D,b)$ is a yes-instance of {\sc DAG Burning} if and only if $(\mathcal{U},\mathcal{S},k)$ is a yes-instance of {\sc Set Cover}, and we have a parameterized reduction from {\sc DAG Burning} to {\sc Set Cover}, which proves that {\sc DAG Burning} is in W[2].
\end{proof}

\begin{theorem}
The {\sc DAG Burning} problem is W[2]-complete with the burning number as parameter.
\end{theorem}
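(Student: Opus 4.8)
The plan is to derive the theorem immediately from the two preceding lemmas together with the known fact that {\sc Set Cover} is W[2]-complete (\cite{cygan2015parameterized}, Theorem~13.21). Recall that a parameterized problem is W[2]-complete precisely when it is both W[2]-hard and contained in W[2], and that parameterized (FPT-)reductions compose, so that hardness and membership transfer along them.

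First I would invoke the lemma giving a parameterized reduction from {\sc Set Cover} to {\sc DAG Burning}. Since {\sc Set Cover} is W[2]-hard and the class of W[2]-hard problems is closed downward under parameterized reductions, this shows {\sc DAG Burning} is W[2]-hard with the burning number as parameter. Here I would briefly note that the reduction in that lemma is indeed a valid parameterized reduction: it runs in polynomial (in particular, FPT) time, and the new parameter $b(k)=k+2$ is a computable function of the source parameter $k$, as already checked.

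Next I would invoke the lemma giving a parameterized reduction from {\sc DAG Burning} to {\sc Set Cover}. Since {\sc Set Cover} lies in W[2] and that class is closed downward under parameterized reductions, this places {\sc DAG Burning} in W[2]; again the reduction is polynomial-time with $k:=b$ a computable function of the source parameter. Combining the two directions yields that {\sc DAG Burning} is W[2]-complete.

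I do not expect a genuine obstacle here, since the substantive work is entirely contained in the two lemmas; the only point requiring a word of care is the bookkeeping that both constructions are bona fide parameterized reductions (polynomial time and computable parameter bound), which the lemma statements and proofs already establish, so the present proof is just their conjunction.

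\begin{proof}
A parameterized problem is W[2]-complete if and only if it is W[2]-hard and belongs to W[2]. By the preceding lemmas, there is a parameterized reduction from {\sc Set Cover} to {\sc DAG Burning} and a parameterized reduction from {\sc DAG Burning} to {\sc Set Cover}. Since {\sc Set Cover} is W[2]-complete (\cite{cygan2015parameterized}, Theorem~13.21), and both W[2]-hardness and membership in W[2] are preserved under parameterized reductions, the first reduction shows that {\sc DAG Burning} is W[2]-hard, and the second shows that {\sc DAG Burning} is in W[2]. Hence {\sc DAG Burning} is W[2]-complete with the burning number as parameter.
\end{proof}
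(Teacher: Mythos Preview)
Your proposal is correct and matches the paper's approach exactly: the paper states the theorem immediately after the two lemmas with no separate proof, having already noted before the lemmas that the two reductions together with the W[2]-completeness of {\sc Set Cover} suffice. Your write-up simply makes explicit what the paper leaves implicit.
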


This theorem implies there is probably no fixed-parameter algorithm for {\sc DAG Burning} with the burning number as parameter. However, this does not exclude the possibility that there are fixed-parameter algorithms with a slightly larger parameter, or non fixed-parameter algorithms that have relatively efficient running time when the burning number is small. An algorithm in the last category is Algorithm~\ref{alg:BruteForceBurning}, an adapted version of the guessing algorithm from \cite{kamali2019burning}. In Theorem~2 in that paper, they essentially give an algorithm with running time $O(n^b\poly(n))$, which is polynomial time when the burning number is bounded by a constant. In their case, this happens when the diameter is bounded by a constant (as the burning number is at most the diameter).

\begin{algorithm}[H]
 \KwData{A digraph $D=(V,A)$ and an integer $b$.}
 \KwResult{YES if $b(D)\leq b$, NO otherwise.}
\For{$S\in V^b$}
{
  \If{$S$ is a burning sequence for $D$}
  {
    \Return YES\;
  }
}
\Return NO\;
\caption{{\sc BruteForceBurning}$(D,b)$}\label{alg:BruteForceBurning}
\end{algorithm}

\begin{proposition}
Algorithm~\ref{alg:BruteForceBurning} solves {\sc Digraph Burning} in $O(n^b \poly(n))$ time.
\end{proposition}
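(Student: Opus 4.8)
The plan is to establish the two halves of the claim separately: correctness of Algorithm~\ref{alg:BruteForceBurning} (it returns YES exactly when $b(D)\le b$), and the $O(n^b\poly(n))$ running time. Correctness in one direction is immediate: if the algorithm returns YES, it has verified that some $S\in V^b$ is a burning sequence of length $b$, so the burning number is at most $b$.

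For the converse, I would show that $b(D)\le b$ implies the existence of a burning sequence of length \emph{exactly} $b$, which the exhaustive loop over $V^b$ must then encounter. The cleanest route is the covering formulation from \cref{sec:Preliminaries}: if $b(D)=b'\le b$, there is a sequence $(v_1,\ldots,v_{b'})$ with $D=\bigcup_{i=1}^{b'}\NbhP_{i-1}(v_i)$. Since $\NbhP_{i-1}(v)\subseteq\NbhP_{j-1}(v)$ whenever $j\ge i$, shifting every range up by $b-b'$ preserves the cover, so $D=\bigcup_{i=1}^{b'}\NbhP_{\,i-1+(b-b')}(v_i)$; filling the now-unused low ranges $1,\ldots,b-b'$ with arbitrary nodes yields a burning assignment $\phi:[b]\to V(D)$ with $D=\bigcup_{i=1}^{b}\NbhP_{i-1}(\phi(i))$. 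By the equivalence between such covers and burning sequences noted in \cref{sec:Preliminaries} (with the caveat that an already-burning chosen node is replaced by an arbitrary non-burning one), this gives a burning sequence of length $b$. It lies in $V^b$, so the algorithm will find it (or another valid length-$b$ sequence) and return YES.

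For the running time, the loop performs exactly $|V|^b=n^b$ iterations. We may assume $b\le n$, because by \cref{clm:StupidBoundDisconnected} the burning number of an $n$-node digraph is at most $n$, so for $b>n$ the instance is trivially a yes-instance. Testing whether a fixed $S=(x_1,\ldots,x_b)$ is a burning sequence amounts to simulating the $b$-step process: maintain the set of burning nodes and, in step $i$, first add all out-neighbours of the currently burning nodes and then add $x_i$, finally checking that all of $V$ is burned. Each step costs $O(n+|A|)\subseteq O(n^2)$, so the check costs $O(b\,n^2)\subseteq O(n^3)=\poly(n)$. Multiplying, the total running time is $n^b\cdot\poly(n)=O(n^b\poly(n))$.

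The only genuinely delicate point is the padding step in the converse direction: one must be sure that a shortest burning sequence can always be lengthened to exactly $b$ steps without losing the property of burning the whole graph. Phrasing this through the covering definition makes it transparent, since enlarging ranges only enlarges neighbourhoods and adding unused ranges is harmless; everything else is routine bookkeeping.
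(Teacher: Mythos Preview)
Your proof is correct. The paper states this proposition without proof, treating it as self-evident given the algorithm's brute-force nature; your argument supplies the missing details. In particular, your care with the padding step (extending a length-$b'$ burning sequence to length exactly $b$) via the covering formulation is cleaner than the naive ``append arbitrary nodes'' argument, since it sidesteps any worry about the paper's caveat that already-burning nodes should not be re-chosen. The running-time analysis is routine and matches what the paper implicitly assumes; your remark that one may take $b\le n$ (by \cref{clm:StupidBoundDisconnected}) is a sensible preprocessing step that the paper also leaves implicit.
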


For a more sophisticated approach, we try to extend the parameter to include the `extra indegree', which makes {\sc DAG Burning} hard compared to {\sc Tree Burning}. The extra indegree is inspired by the reticulation number of a phylogenetic network (see, for example, \cite{janssen2018exploring}), and is defined as follows. The \emph{reticulation number} $k(D)$ of a digraph $D=(V,A)$ is $\sum_{v\in V}\max(0,\delta^-(v)-1)$. The following algorithm is a fixed-parameter algorithm for {\sc Digraph Burning} with $(k(D)+1)b$ as parameter.

\begin{algorithm}[H]\label{alg:DigraphBurning}
 \KwData{A digraph $D$, a subset $X$ of nodes to be burned, and a set of allowed burning ranges $R$.}
 \KwResult{A map $\phi:R\to V(D)$ if $X$ can be burned by using $R$, NO otherwise.}
Let $x\in X$ be arbitrary\;
\For{$r$ in $R$}
{
  \For{$v\in \NbhM_{r-1}(x)$}{
     Set $X':=X\setminus\NbhP_{r-1}(v)$\;
     Calculate {\sc DigraphBurning}$(D,X',R\setminus\{r\})$\;
     \If{the result is a map $\phi'$}{
       let $\phi$ be the map obtained from $\phi'$ by adding $r\mapsto v$\;
       \Return $\phi$\;
     }
  }
}
\Return NO\;
\caption{{\sc DigraphBurning}$(D,X,R)$}
\end{algorithm}

\begin{lemma}\label{clm:PDBWR}
Algorithm~\ref{alg:DigraphBurning} solves {\sc Partial Digraph Burning With Ranges} in $O(((k(D)+1)\max(R)|R|)^{|R|}\poly(n))$ time.
\end{lemma}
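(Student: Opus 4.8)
The plan is to establish correctness of Algorithm~\ref{alg:DigraphBurning} first, and then analyse its running time. For correctness, I would argue by induction on $|R|$ that the recursive call {\sc DigraphBurning}$(D,X,R)$ returns a valid burning assignment $\phi:R\to V(D)$ with $X\subseteq\bigcup_{r\in R}\NbhP_{r-1}(\phi(r))$ whenever one exists, and NO otherwise. The base case $R=\emptyset$ is immediate: a solution exists iff $X=\emptyset$. For the inductive step, fix any $x\in X$ (the algorithm's choice is arbitrary, which is fine since \emph{some} range must cover $x$). If a solution $\phi$ exists, then $x\in\NbhP_{r-1}(\phi(r))$ for some $r\in R$, which is equivalent to $\phi(r)\in\NbhM_{r-1}(x)$. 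So the algorithm's double loop over $r\in R$ and $v\in\NbhM_{r-1}(x)$ will at some point hit the correct pair $(r,\phi(r))$; at that iteration, $X'=X\setminus\NbhP_{r-1}(v)$ is exactly the residual set that $\phi\restriction_{R\setminus\{r\}}$ must cover, so by the induction hypothesis the recursive call succeeds and the reconstructed map is valid. Conversely, any map the algorithm returns is valid by construction, since each added pair $r\mapsto v$ removes precisely $\NbhP_{r-1}(v)$ from the set still to be burned. This shows the algorithm solves {\sc Partial Digraph Burning With Ranges}.

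For the running time, the key observation is a bound on the branching factor $\sum_{r\in R}|\NbhM_{r-1}(x)|$. Each term is at most $|\NbhM_{\max(R)-1}(x)|$, so it suffices to bound the size of a $(\max(R)-1)$-in-neighbourhood of a single node in terms of $k(D)$ and $\max(R)$. Here is where I would use the reticulation number: intuitively, if we trace backwards from $x$, at each step the number of ``active'' ancestors can only grow when we pass through a node of in-degree $\geq 2$, and the total excess in-degree available is $k(D)$. More precisely, I would show that the in-neighbourhood reached within $j$ steps has size at most $(k(D)+1)(j+1)$ or a similar linear-in-$j$ bound controlled by $k(D)$ — the rough idea being that the set of ancestors at distance exactly $i$ has size at most $1$ plus the total branching seen so far, and the branching is globally bounded by $k(D)$. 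Combined over $j\le \max(R)-1$ this gives $|\NbhM_{\max(R)-1}(x)| = O((k(D)+1)\max(R))$, hence the branching factor is $O((k(D)+1)\max(R)|R|)$.

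The recursion tree then has depth $|R|$ and branching factor $O((k(D)+1)\max(R)|R|)$, so it has $O(((k(D)+1)\max(R)|R|)^{|R|})$ leaves; each node of the tree does $\poly(n)$ work (computing the in-neighbourhood $\NbhM_{r-1}(x)$, the out-neighbourhood $\NbhP_{r-1}(v)$, and the set difference), giving the claimed $O(((k(D)+1)\max(R)|R|)^{|R|}\poly(n))$ bound. The main obstacle I anticipate is making the in-neighbourhood size bound fully rigorous: one has to be careful that the same ancestor reachable by several back-paths of different lengths is not double-counted, and that the accounting against $k(D)=\sum_v \max(0,\delta^-(v)-1)$ correctly charges each extra in-edge at most once. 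I would handle this by considering a BFS/layered structure on $\NbhM_{\max(R)-1}(x)$ and bounding the number of nodes that first appear at layer $i+1$ by the total ``extra in-degree'' of nodes at layers $\le i$, then summing the telescoping bound.
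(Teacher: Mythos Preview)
Your proposal is correct and follows essentially the same structure as the paper's proof: correctness via the observation that some range must cover the chosen $x$ (you phrase it as induction on $|R|$, the paper phrases it as exhausting all choices, but the content is identical), and running time via the bound $|\NbhM_{r-1}(x)|\le (k(D)+1)\max(R)$ combined with the $|R|$-deep branching. In fact the paper simply asserts this in-neighbourhood bound without justification, whereas your layered-BFS accounting (each layer $S_{j+1}$ has $|S_{j+1}|\le |S_j|+\sum_{w\in S_j}\max(0,\delta^-(w)-1)$, so $|S_j|\le 1+k(D)$ and the total over $r$ layers is at most $(k(D)+1)\max(R)$) is exactly the right way to make it rigorous.
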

\begin{proof}
We first show the algorithm is correct. Note that each node in $X$ has to burned by some range. Hence, we can pick an arbitrary node $x\in X$, and try each possible assignment of a burning range $r\in R$ to a node $v\in V$ such that $x\in\NbhP_{r-1}(v)$. If there exists a burning assignment, then one of these choices has to be part of such an assignment. For each of these choices, the algorithm checks whether the remaining problem $(D,X\setminus\NbhP_{r-1}(v), R\setminus\{r\})$ has a solution. If it does, then this solution together with $r\mapsto v$ is a solution to the original problem. If no such solution exists for any choice, then the original problem is not solvable, and the algorithm correctly returns NO.

Now we consider the running time. For any fixed range $r$, the size of the neighbourhood $\NbhM_{r-1}(v)$ is at most $(k(D)+1)\max(R)$. For each of the nodes in $\NbhM_{r-1}(v)$, we have to try at most $|R|$ burning ranges. Branching like this, we have to go to depth $|R|$ in the search tree to try all options. Each of the recursive calls takes $\poly(n)$ time for computing the neighbourhoods and the new instances. Hence, the total running time is $O(((k(D)+1)\max(R)|R|)^{|R|}\poly(n))$.
\end{proof}

\begin{theorem}
Algorithm~\ref{alg:DigraphBurning} solves {\sc Digraph Burning} in $O(((k(D)+1)b^2)^b\poly(n))$ time.
\end{theorem}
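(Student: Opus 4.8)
The plan is to derive this statement as an immediate corollary of \cref{clm:PDBWR}, by recognising {\sc Digraph Burning} as the special case of {\sc Partial Digraph Burning With Ranges} in which the whole vertex set must be burned and the ranges are $1,\ldots,b$. So first I would recall the cover characterisation established in \cref{sec:Preliminaries}: a digraph $D$ has burning number at most $b$ if and only if there is a burning assignment $\phi:[b]\to V(D)$ with $D=\bigcup_{i=1}^b\NbhP_{i-1}(\phi(i))$. Hence, to decide whether $b(D)\leq b$ it suffices to run Algorithm~\ref{alg:DigraphBurning} on the instance $(D,V(D),[b])$, and by \cref{clm:PDBWR} this call is correct and returns such a $\phi$ whenever one exists.

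To actually output a burning sequence rather than an assignment, I would then note that one reverses $\phi$ to obtain $(\phi(b),\ldots,\phi(1))$, and, in the (harmless) event that some $\phi(i)$ is already burning at the step in which it would be chosen, replaces it by an arbitrary not-yet-burning node, exactly as remarked after the cover definition; this post-processing is clearly polynomial and does not affect which vertices end up burned.

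For the running time I would simply substitute $R=[b]$ into the bound of \cref{clm:PDBWR}: here $\max(R)=b$ and $|R|=b$, so the running time is $O\big(((k(D)+1)\max(R)|R|)^{|R|}\poly(n)\big)=O\big(((k(D)+1)b\cdot b)^{b}\poly(n)\big)=O\big(((k(D)+1)b^{2})^{b}\poly(n)\big)$, and the polynomial overhead of constructing the initial instance $(D,V(D),[b])$ and of extracting the final sequence is absorbed into the $\poly(n)$ factor.

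There is no genuinely hard step here; the only point requiring a little care is the bookkeeping in the first two paragraphs — keeping the burning-sequence formulation and the burning-assignment formulation apart and verifying that the caveat about not choosing an already-burning node leaves feasibility unchanged — after which the theorem is a two- or three-line invocation of \cref{clm:PDBWR}.
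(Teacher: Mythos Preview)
Your proposal is correct and follows exactly the paper's approach: the paper's proof is a one-line invocation of \cref{clm:PDBWR} with $X=V(D)$ and $R=[b]$, noting $\max(R)=|R|=b$. Your version simply adds the (correct) bookkeeping about translating the burning assignment back into a burning sequence, which the paper leaves implicit.
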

\begin{proof}
A direct consequence of \cref{clm:PDBWR} when setting $X=V(D)$ and $R=[b]$, in which case $r=|R|=b$.
\end{proof}

Although the running time bound seems quite high, there may be heuristic improvements. For example in Line~1, where the algorithm chooses a node $x\in X$ arbitrarily. Of course, this is not always optimal: one can choose a node $x$ to minimize the size of the neighbourhood $\NbhM_{r-1}(x)$. This makes sure the number of branches is much lower than the theoretical bound of $(k(D)+1)\max(R)|R|$. For example, when $x$ is a source, the number of branches is $|R|$.

\section{Discussion}\label{sec:Discussion}
In this paper, we have studied the directed burning problem. We have given sharp upper bounds on the burning number for digraphs in general ($n-1$), for weakly connected graphs ($n-1$), for single source DAGs ($\ceil{\sqrt{2n+1/4}-1/2}$), and for strongly connected graphs ($\ceil{\sqrt{2n+1/4}-1/2}$). The simple proofs for these bounds contrasts sharply with the undirected case, where the question whether all graphs have burning number $\ceil{\sqrt{n}}$ remains open.

Then, we turned to the computational complexity of computing the burning number. We have shown that computing the burning number of a directed tree is NP-hard. This result may be surprising, as the direction on the trees seems to severely restrict the number of ways one can burn leaves. However, considering that graph burning is hard on undirected trees as well, the result might be less surprising. Another new result for trees is that computing the burning number of a directed tree is FPT with the burning number as parameter. A similar result has not been proven for undirected trees yet, so it would be interesting to see if our results can be leveraged in the undirected version of the problem.

As may be expected, graph burning gets harder when more general graphs are considered: When the burning problem is considered on DAGs with the burning number as parameter, the complexity jumps to W[2]-complete, which we showed with reductions to and from {\sc Set Cover}. This implies there probably is no fixed-parameter algorithm for directed graph burning with the burning number as parameter. Whether there is such an algorithm for undirected graph burning is still an open problem \cite{kare2019parameterized}. Again, it might be interesting to try to use our techniques or results in the undirected case; either by finding parameterized reductions to and from {\sc Set Cover}, or directly to and from {\sc DAG Burning}.

Finally, we have shown that there is an fixed-parameter algorithm for directed graph burning with a parameter consisting of the burning number and the reticulation number, a parameter inspired by phylogenetics research. The algorithm uses recursion, which meant we had to solve the more general problem {\sc Partial Digraph Burning With Ranges}. It is worth mentioning that this problem not only contains {\sc Digraph Burning}, but also a directed version of {\sc Dominating Set}---consider the instance $(D,V(D),\{2,2,\ldots,2,2\})$ with $k$ twos for the {\sc Dominating Set} instance $(D,k)$---and maybe more graph covering problems. 

To really understand the computational complexity of the burning problem, it may be worth studying more restricted problems of {\sc Partial Digraph Burning With Ranges}. For example, what happens if we look for the partial burning number of a DAG where we need to burn only the sinks? Or one could try bounding other graphs parameters, such as the tree-length (or scan-width for DAGs, \cite{berry2020scanning}) used for undirected burning in \cite{kamali2019burning}, and look for approximation algorithms.

\paragraph{Acknowledgements.} The author would like to thank Josse van Dobben de Bruyn for bringing graph burning to his attention, and Mark Jones for lending his expertise in parametrized complexity.

\bibliographystyle{alpha}
\bibliography{Bibliography}

\end{document}